\theoremstyle{plain}
\newtheorem{theorem}                 {\bf Theorem}        [section]
\newtheorem{proposition}  [theorem]  {\bf Proposition}
\newtheorem{lemma}        [theorem]  {\bf Lemma}
\theoremstyle{definition}
\newtheorem{example}      [theorem]  {\bf Example}
\newtheorem{definition}   [theorem]  {\bf Definition}
\numberwithin{equation}{section}
\begin{document}

\def\nab#1#2#3{\nabla^{\hbox{$\scriptstyle{#1}$}}_{\hbox{$\scriptstyle{#2}$}}{\hbox{$#3$}}}
\def\nabl#1#2{\nabla_{\hbox{$\scriptstyle{#1}$}}{\hbox{$#2$}}}
\def\tnabl#1#2{\hat{\nabla}_{\hbox{$\scriptstyle{#1}$}}{\hbox{$#2$}}}
\def\nnab#1{\nabla_{\hbox{$\scriptstyle{#1}$}}}
\def\tnab#1#2{\widetilde{\nabla}_{\hbox{$\scriptstyle{#1}$}}\hbox{$#2$}}
\def\rn{\mathbb{R}}
\def\cn{\mathbb{C}}
\def\zn{\mathbb{Z}}
\def\rk{\mathbb{K}}
\def\ci{\mathcal{I}}
\def\cL{\mathcal{L}}
\def\v{\mathfrak{v}}
\def\b{\mathfrak{b}}
\def\fr{\mathfrak{r}}
\def\z{\mathfrak{z}}
\def\g{\mathfrak{g}}
\def\gl{\mathfrak{gl}}
\def\sl{\mathfrak{sl}}
\def\k{\mathfrak{k}}
\def\p{\mathfrak{p}}
\def\h{\mathfrak{h}}
\def\s{\mathfrak{s}}
\def\so{\mathfrak{so}}
\def\n{\mathfrak{n}}
\def\m{\mathfrak{m}}
\def\a{\mathfrak{a}}
\def\V{\mathcal{V}}
\def\H{\mathcal{H}}
\def\L{\mathcal{L}}
\def\B{\mathfrak{B}}
\def\Hi{\mathfrak{H}}
\def\U{\mathfrak{U}}
\def\F{\mathcal{F}}
\def\tr{\operatorname{trace}}
\def\sp{\operatorname{span}}
\def\grad{\operatorname{grad}}
\def\div{\operatorname{div}}
\def\ad{\operatorname{ad}}
\def\Ad{\operatorname{Ad}}
\def\Aut{\operatorname{Aut}}
\def\Ric{\operatorname{Ric}}
\def\dim{\operatorname{dim}}
\def\End{\operatorname{End}}
\def\pd#1{\frac{\partial}{\partial #1}}
\def\dd#1{\frac{\operatorname{d}}{\operatorname{d}#1}}
\def\dop#1{\operatorname{d}#1}
\def\SPE#1#2{\langle #1,#2\rangle}
\def\pro{\textsc{Proof}}

\title{Curvature conditions for complex-valued\\
harmonic morphisms}

\date{}

\author{Jonas Nordstr\" om}

\keywords{harmonic morphisms, totally geodesic, holomorphic}

\subjclass[2000]{58E20, 53C43, 53C12}

\address
{Department of Mathematics, Faculty of Science, Lund University,
Box 118, S-221 00 Lund, Sweden}
\email{Jonas.Nordstrom@math.lu.se}

\begin{abstract}
We study the curvature of a manifold on which there can be defined a complex-valued
submersive harmonic morphism with either, totally geodesic fibers or that is holomorphic with
respect to a complex structure which is compatible with the second fundamental form.

We also give a necessary curvature condition for the existence of complex-valued harmonic
morphisms with totally geodesic fibers on Einstein manifolds.
\end{abstract}

\maketitle

\section{Introduction}

A harmonic morphism is a map between two Riemannian manifolds
that pulls back local harmonic functions to local harmonic functions.
The simplest examples of harmonic morphisms are constant maps,
real-valued harmonic functions and isometries.
A characterization of harmonic morphisms was given by Fuglede and Ishihara,
they showed in \cite{Fuglede} and \cite{Ishihara}, respectively, that harmonic
morphisms are exactly the horizontally weakly conformal harmonic maps. 
If we restrict our attention to the maps where the codomain is a surface then
the harmonic morphisms are the horizontally weakly conformal maps with minimal fibers
at regular points. 

Between two surfaces the harmonic morphisms are exactly the weakly conformal maps. Since the composition
of two harmonic morphisms is again a harmonic morphism, we get that,
locally any harmonic morphism to a surface can be turned into a harmonic morphism to the
complex plane by composing with a weakly conformal map.

Local existence of harmonic morphisms can be characterized in terms of foliations.
If the codomain is a surface then the existence of a local harmonic morphism is equivalent to
the existence of a local conformal foliation with minimal fibers at regular points, see \cite{Wood86} by Wood.

Baird and Wood found a necessary condition, see \cite{BaiWoo} Corollary 4.4, on the curvature
for local existence of complex-valued harmonic morphisms on three-manifolds.
In this case the fibers are geodesics and there is an orthonormal basis
$\{X,Y\}$ for the horizontal space such that the \textbf{Ricci curvature condition}
\[\Ric(X,X)=\Ric(Y,Y)\textrm{ and }\Ric(X,Y)=0,\]
is satisfied. In three dimensions this is equivalent to
\[\SPE{R(X,U)U}{X}=\SPE{R(Y,U)U}{Y}\textrm{ and }\SPE{R(X,U)U}{Y}=0\]
for any vertical unit vector $U$, which in turn is equivalent to the fact that the
sectional curvature $K(X_{\theta}\wedge U)$ is
independent of $\theta$ where $X_{\theta}=\cos(\theta)X+\sin(\theta)Y$.

We show in this paper that the last condition is true for any complex-valued submersive harmonic morphism with
totally geodesic fibers.
\begin{theorem}\label{Jon-Curv}
Let $(M,g)$ and $(N^{2},h)$ be a Riemannian manifolds, let $\phi:(M,g)\to (N^{2},h)$ be a submersive
harmonic morphism with totally geodesic fibers and $p\in M$.
Given any $U,V\in\V_{p}=\ker(\dop\phi)$ and any orthonormal basis $\{X,Y\}$ for $\H_{p}=\V_{p}^{\bot}$,
set $X_{\theta}=\cos(\theta)X+\sin(\theta)Y$. Then
\[\SPE{R(X_{\theta}\wedge U)}{X_{\theta}\wedge V}\]
is independent of $\theta$.
\end{theorem}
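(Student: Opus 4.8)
The plan is to rephrase the claim as a single isotropic curvature identity and then to verify it by writing $\phi$ locally as a $\cn$--valued map and exploiting its complexified gradient. First I would perform the trigonometric reduction. Fixing $U,V\in\V_p$ and the orthonormal basis $\{X,Y\}$, the function $f(\theta):=\SPE{R(X_\theta,U)X_\theta}{V}$ --- which, up to the standard sign convention for the curvature operator, is $\SPE{R(X_\theta\wedge U)}{X_\theta\wedge V}$ --- is quadratic in $(\cos\theta,\sin\theta)$ and hence of the form $c_0+c_1\cos2\theta+c_2\sin2\theta$. Introducing the isotropic complex vector $Z=\tfrac1{\sqrt2}(X-iY)$ and extending $g$ and $R$ complex--bilinearly, so that $X_\theta=\tfrac1{\sqrt2}\big(e^{i\theta}Z+e^{-i\theta}\bar Z\big)$, one computes $f(\theta)=c_0+\operatorname{Re}\!\big(e^{2i\theta}\SPE{R(Z,U)Z}{V}\big)$. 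Thus $f$ is constant in $\theta$ precisely when $\SPE{R(Z,U)Z}{V}=0$; since another orthonormal basis only multiplies $Z$ by a unit scalar or replaces it by $\bar Z$ (whose condition is the complex conjugate), it is enough to establish
\[\SPE{R(\zeta,U)\zeta}{V}=0\]
for \emph{one} non-zero isotropic vector $\zeta\in\H_p^{\cn}$ and all $U,V\in\V_p$.

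To produce such a $\zeta$, I would compose $\phi$ near $p$ with a holomorphic chart of $N^{2}$; this gives a submersive harmonic morphism $\psi=u+iv\colon W\to\cn$ with the same fibres on a neighbourhood $W$ of $p$, so $u,v$ are harmonic, $|\grad u|=|\grad v|$ and $\SPE{\grad u}{\grad v}=0$. Put $\zeta:=\grad u+i\grad v$. Then $\zeta$ is horizontal, $\SPE{\zeta}{\zeta}\equiv0$ by horizontal conformality, and $\dop\psi(\zeta)=\SPE{\zeta}{\zeta}=0$; moreover $\zeta$ spans one of the two isotropic lines of $\H_p^{\cn}$, so by the previous step it suffices to show $\SPE{R(\zeta,U)\zeta}{V}=0$. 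Because $\SPE{\zeta}{\zeta}\equiv0$ and $S:=\operatorname{Hess}\psi=\nabla\grad\psi$ is symmetric, $\nabla_\zeta\zeta=\tfrac12\grad\SPE{\zeta}{\zeta}=0$, and expanding $R(\zeta,U)\zeta=\nabla_\zeta\nabla_U\zeta-\nabla_U\nabla_\zeta\zeta-\nabla_{[\zeta,U]}\zeta$ (the middle term vanishing) gives
\[\SPE{R(\zeta,U)\zeta}{V}=(\nabla_\zeta\operatorname{Hess}\psi)(U,V)+\SPE{S(U)}{S(V)}.\]

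Now the totally geodesic hypothesis enters, and this is the heart of the matter. Vanishing of the fibres' second fundamental form is equivalent both to ``$\nabla_U X$ is horizontal whenever $U$ is vertical and $X$ horizontal'' and to ``$\operatorname{Hess}\psi(U,V)=0$ for all vertical $U,V$''. From the first, $S(U)=\nabla_U\zeta$ is horizontal; since $\dop\psi([\zeta,U])=0$ the bracket $[\zeta,U]$ is vertical, so $S(U)=\H\nabla_\zeta U$; and $\SPE{S(U)}{\zeta}=\SPE{\nabla_\zeta U}{\zeta}=\zeta\SPE{U}{\zeta}-\SPE{U}{\nabla_\zeta\zeta}=0$. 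As $\H_p^{\cn}$ is two--dimensional and $\zeta$ is isotropic, its orthogonal complement in $\H_p^{\cn}$ is $\sp\{\zeta\}$, so $S(U)\in\sp\{\zeta\}$ for every vertical $U$. Hence $\SPE{S(U)}{S(V)}$ is a multiple of $\SPE{\zeta}{\zeta}=0$; and, extending $U,V$ to local vertical fields so that $\operatorname{Hess}\psi(U,V)\equiv0$, one has $(\nabla_\zeta\operatorname{Hess}\psi)(U,V)=-\operatorname{Hess}\psi(\nabla_\zeta U,V)-\operatorname{Hess}\psi(U,\nabla_\zeta V)$, in which $\operatorname{Hess}\psi(\nabla_\zeta U,V)=\SPE{\nabla_\zeta U}{S(V)}$ vanishes since $S(V)\in\sp\{\zeta\}$ and $\SPE{\nabla_\zeta U}{\zeta}=0$, and likewise for the remaining term. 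Therefore $\SPE{R(\zeta,U)\zeta}{V}=0$, which proves the theorem.

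The step I expect to be the genuine obstacle is precisely this simultaneous annihilation of the two summands in the last displayed identity: it relies on $S(\V)$ lying on the single isotropic line $\sp\{\zeta\}$, a rigidity that uses both that $\H$ is a plane --- that is, $\dim N=2$ --- and that the fibres are totally geodesic and not merely minimal. For $\dim\H>2$ the argument collapses, consistently with the hypotheses being needed.
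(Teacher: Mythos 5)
Your argument is correct, and it takes a genuinely different route from the paper's. The paper works entirely inside the O'Neill/Gudmundsson fundamental equations: it expands $\SPE{R(X_{\theta}\wedge U)}{X_{\theta}\wedge V}$ into $\cos^{2}\theta$, $\sin^{2}\theta$ and $\cos\theta\sin\theta$ coefficients and verifies term by term, using $A_{X}Y+A_{Y}X=2\SPE{X}{Y}\V(\grad\ln\lambda)$ and $A_{X}Y-A_{Y}X=\V([X,Y])$, that the two diagonal coefficients agree and the cross term vanishes. You instead compress the whole statement into the single isotropic identity $\SPE{R(\zeta,U)\zeta}{V}=0$ for $\zeta=\grad\psi$ and prove it by differentiating $\operatorname{Hess}\psi$; the hypotheses enter exactly where you say: horizontal conformality gives $\SPE{\zeta}{\zeta}\equiv0$, hence $\nabla_{\zeta}\zeta=0$ and $\SPE{S(U)}{\zeta}=\tfrac12\,U\SPE{\zeta}{\zeta}=0$; $\dim N=2$ forces $S(\V)\subseteq\zeta^{\perp}\cap\H^{\cn}=\sp\{\zeta\}$; and totally geodesic fibres kill $\operatorname{Hess}\psi|_{\V\times\V}$. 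Your route buys conceptual economy (no $A$-tensor bookkeeping, and a transparent explanation of why both $\dim N=2$ and $B=0$ are needed), while the paper's computation produces the explicit intermediate quantities ($U(\ln\lambda)V(\ln\lambda)$, $\tfrac14\SPE{[X,Y]}{U}\SPE{[X,Y]}{V}$, etc.) that it reuses later; both use only horizontal conformality and $B=0$, not harmonicity per se.

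One small inaccuracy, which does not damage the proof: $\dop\psi([\zeta,U])=0$ does not force $[\zeta,U]$ to be vertical, because the kernel of the $\cn$-linear extension of $\dop\psi$ is $\V^{\cn}\oplus\sp\{\zeta\}$ (after all, $\dop\psi(\zeta)=\SPE{\zeta}{\zeta}=0$); in general $\H[\zeta,U]$ is a non-zero multiple of $\zeta$ when $U(\lambda)\neq0$. You do not need that step: $\SPE{S(U)}{\zeta}=\SPE{\nabla_{U}\zeta}{\zeta}=\tfrac12\,U\SPE{\zeta}{\zeta}=0$ directly, and since the spurious horizontal component of $[\zeta,U]$ lies in $\sp\{\zeta\}$ and is therefore orthogonal to $\zeta$, every subsequent conclusion stands as written.
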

In four dimensions or more this is stronger than the Ricci curvature condition.
Note that Example 6.1 and 6.2 of \cite{Gud-Sven-2013} by Gudmundsson and Svensson
do not have totally geodesic fibers. So they are counterexamples to the Ricci curvature condition
only in the case of minimal but not totally geodesic fibers.
We present these two examples in Example \ref{GudSvenEx1} and \ref{GudSvenEx2}. 

If we assume that the domain $(M,g)$ is an Einstein manifold, then the curvature operator,
in a suitably chosen basis,
splits into two blocks and we find that there are at least $\dim(M)-2$
double eigenvalues for the curvature operator.
We use this to give an example of a five dimensional homogeneous Einstein
manifold that does not have any submersive harmonic morphism with totally geodesic
fibers.

Harmonic morphisms with totally geodesic fibers have been studied in different ways before. Baird and Wood,
Section 6.8 \cite{BW-book}, classify them in the constant curvature case. Later Pantilie generalized
to the case where the domain is conformaly equivalent to constant curvature, \cite{Pantilie08}.
Mustafa \cite{Mustafa} gave a Bochner type curvature formula and applied it to foliations
with large codimension.

We end this paper by showing that the Ricci curvature condition is satisfied by harmonic morphisms
that are holomorphic with respect to a complex structure and where the second fundamental
form is compatible with the complex structure. The result is similar
to Proposition 6.3 from \cite{LouPan}, where Loubeau and Pantilie describe twistorial
harmonic morphisms, but only in $4$ dimensions.

\section{The curvature condition}

Let $(M,g)$ and $(N,h)$ be Riemannian manifolds and let $\phi:(M,g)\to(N,h)$
be a smooth submersion. Denote the vertical distribution associated with $\phi$ by $\V=\ker(\dop\phi)$
and the horizontal distribution by $\H=\V^{\bot}$.
For two vector fields $E,F$ on $M$ define the tensors $A$ and $B$, introduced in \cite{ONeill}, by
\[A_{E}F=\V(\nabla_{\H E}\H F)\textrm{ and }B_{E}F=\H(\nabla_{\V E}\V F).\]
$B$ is called the second fundamental form and the fibers of $\phi$ are said to be totally geodesic if $B=0$.
The dual $A_{X}^{*}$ of $A_{X}$ satisfies $A^{*}_{X}F=-\H(\nabla_{X}\V F)$ for $X\in \H$ and the dual $B_{U}^{*}$
of $B_{U}$ satisfies $B^{*}_{U}F=-\V(\nabla_{U}\H F)$ for $U\in\V$.

Gudmundsson calculated the curvature for a horizontally conformal submersion in \cite{Gud-thesis}, we state the
results from Proposition 2.1.2, and Theorem 2.2.3 (2) and (3) below.
\begin{proposition}\label{Gud-curv}
Let $(M,g)$ and $(N,h)$ be Riemannian manifolds and let $\phi:(M,g)\to(N,h)$ be a
horizontally conformal submersion with dilation $\lambda:M\to(0,\infty)$.
Let $U,V,W$ be vertical vectors and $X,Y$ be horizontal vectors, then
\begin{align*}
(i)&\,A_{X}Y=\frac{1}{2}\V([X,Y])+\SPE{X}{Y}\V(\grad\ln\lambda)\\
(ii)&\,\SPE{R(U\wedge V)}{W\wedge X}=\SPE{(\nabla_{U}B)_{V}W}{X}-\SPE{(\nabla_{V}B)_{U}W}{X}\\
(iii)&\,\SPE{R(U\wedge X}{Y\wedge V}=\SPE{(\nabl{U}{A})_{X}Y}{V}+\SPE{A^{*}_{X}U}{A^{*}_{Y}V}+\SPE{(\nabla_{X}B^{*})_{U}Y}{V}\\
&-\SPE{B^{*}_{V}Y}{B^{*}_{U}X}-2 V(\ln \lambda)\SPE{A_{X}Y}{U}.
\end{align*}
\end{proposition}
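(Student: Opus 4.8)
The plan is to regard $f(\theta)=\SPE{R(X_{\theta}\wedge U)}{X_{\theta}\wedge V}$ as a function of $\theta$ and show that its non-constant Fourier modes vanish. Writing $c=\cos\theta$, $s=\sin\theta$ and expanding $X_{\theta}\wedge U=c\,X\wedge U+s\,Y\wedge U$ together with $X_{\theta}\wedge V=c\,X\wedge V+s\,Y\wedge V$, bilinearity of the curvature operator gives
\[
f(\theta)=c^{2}a+cs(b+b')+s^{2}d,
\]
where $a=\SPE{R(X\wedge U)}{X\wedge V}$, $b=\SPE{R(X\wedge U)}{Y\wedge V}$, $b'=\SPE{R(Y\wedge U)}{X\wedge V}$ and $d=\SPE{R(Y\wedge U)}{Y\wedge V}$. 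In double-angle form $f(\theta)=\tfrac12(a+d)+\tfrac12(a-d)\cos2\theta+\tfrac12(b+b')\sin2\theta$, so the statement is equivalent to the two identities $a=d$ and $b+b'=0$, and these are what I would establish.

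To bring in the hypotheses I would rewrite these four numbers using Proposition \ref{Gud-curv}. Totally geodesic fibers mean $B=0$, hence $B^{*}=0$, so part $(iii)$ collapses to
\[
\SPE{R(U\wedge X)}{Y\wedge V}=\SPE{(\nabla_{U}A)_{X}Y}{V}+\SPE{A^{*}_{X}U}{A^{*}_{Y}V}-2V(\ln\lambda)\SPE{A_{X}Y}{U}
\]
for horizontal $X,Y$. By the antisymmetry $\SPE{R(X\wedge U)}{\,\cdot\,}=-\SPE{R(U\wedge X)}{\,\cdot\,}$ this expresses $a,b,b',d$ through the right-hand side, so the two identities become $\SPE{(\nabla_{U}A)_{X}X}{V}=\SPE{(\nabla_{U}A)_{Y}Y}{V}$ and $\SPE{(\nabla_{U}A)_{X}Y+(\nabla_{U}A)_{Y}X}{V}=0$, provided the $A^{*}$- and $\lambda$-terms cancel. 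For the latter I would use part $(i)$: on an orthonormal pair with $\H_{p}=\sp\{X,Y\}$ one has $A_{X}X=A_{Y}Y=\V(\grad\ln\lambda)=:Z$ and $A_{X}Y=-A_{Y}X=\tfrac12\V([X,Y])=:W$, whence $A^{*}_{X}U=\SPE{U}{Z}X+\SPE{U}{W}Y$ and $A^{*}_{Y}U=-\SPE{U}{W}X+\SPE{U}{Z}Y$. A direct substitution then shows that the $\lambda$-contributions cancel because $A_{X}X=A_{Y}Y$ and $A_{X}Y+A_{Y}X=0$, and that the $A^{*}$-contributions cancel as well; this is the step where two-dimensionality of $\H_{p}$ is genuinely used.

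The conceptual heart is the two remaining identities for $\nabla_{U}A$. Here I would use that $A$ is a genuine tensor, so that
\[
(\nabla_{U}A)_{X}Y=\nabla_{U}(A_{X}Y)-A_{\H\nabla_{U}X}\,Y-A_{X}\,\H\nabla_{U}Y,
\]
where only the horizontal parts $\H\nabla_{U}X,\H\nabla_{U}Y$ enter because $A_{E}F$ sees $E,F$ only through $\H E,\H F$. Polarizing part $(i)$ yields $A_{E}F+A_{F}E=2\SPE{E}{F}Z$ for horizontal $E,F$. Extending $\{X,Y\}$ to a local orthonormal horizontal frame gives $\SPE{\nabla_{U}X}{X}=\SPE{\nabla_{U}Y}{Y}=0$ and $\SPE{\nabla_{U}X}{Y}+\SPE{\nabla_{U}Y}{X}=0$. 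Adding the expansions of $(\nabla_{U}A)_{X}Y$ and $(\nabla_{U}A)_{Y}X$ and grouping the correction terms into polarized pairs collapses them to $-2\big(\SPE{\nabla_{U}X}{Y}+\SPE{\nabla_{U}Y}{X}\big)Z=0$, while the diagonal correction is $-2\SPE{\nabla_{U}X}{X}Z=0$, giving $(\nabla_{U}A)_{X}X=\nabla_{U}Z=(\nabla_{U}A)_{Y}Y$. This settles both identities and hence the theorem.

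The main obstacle I anticipate is precisely this $\nabla_{U}A$ step: it is tempting to differentiate the pointwise relations $A_{X}X=A_{Y}Y$ and $A_{X}Y=-A_{Y}X$ directly, but these hold only after the metric- and projection-dependent decomposition of part $(i)$, which is not $\nabla$-parallel, so the correction terms $A_{\H\nabla_{U}X}Y$ cannot simply be dropped. The resolution is to retain them and observe that the polarization identity forces their cancellation in exactly the symmetric combinations that arise. A secondary point to get right is the bookkeeping of sign and ordering conventions for $R$ on $2$-forms when passing between the expression in Proposition \ref{Gud-curv} and the quantities $a,b,b',d$; this is routine but must be carried out consistently.
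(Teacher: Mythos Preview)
Your proposal does not address Proposition~\ref{Gud-curv} at all. That proposition records three curvature identities for a general horizontally conformal submersion, and the paper does not prove it: it is quoted verbatim from Gudmundsson's thesis (Proposition~2.1.2 and Theorem~2.2.3 there). What you have written is instead a proof of Theorem~\ref{Jon-Curv}, the $\theta$-independence of $\SPE{R(X_{\theta}\wedge U)}{X_{\theta}\wedge V}$ for a submersive harmonic morphism to a surface with totally geodesic fibres; indeed you invoke Proposition~\ref{Gud-curv} as a tool rather than establishing any part of it. A proof of Proposition~\ref{Gud-curv} itself would require deriving the O'Neill-type formulas for $A_{X}Y$ and for the mixed curvature components from the Levi-Civita connection and the horizontal-conformality hypothesis, which is an entirely different computation.

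That said, viewed as a proof of Theorem~\ref{Jon-Curv}, your argument is correct and follows essentially the same route as the paper: expand $f(\theta)$ bilinearly, reduce to the two identities $a=d$ and $b+b'=0$, apply part~(iii) of Proposition~\ref{Gud-curv} with $B=0$, check that the $A^{*}$- and $\lambda$-terms match using $A_{X}X=A_{Y}Y=\V(\grad\ln\lambda)$ and $A_{X}Y=-A_{Y}X$, and then handle the $(\nabla_{U}A)$-terms by exploiting tensoriality and the orthonormality of the extended horizontal frame. The paper performs these same steps via a slightly more explicit line-by-line expansion, but the structure, the key cancellations, and the use of two-dimensionality of $\H$ are identical.
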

It is well-known that $A_{X}Y+A_{Y}X=2\SPE{X}{Y}\V(\grad\ln\lambda)$ and $A_{X}Y-A_{Y}X=\V([X,Y])$.
Suppose that $(N,h)$ is a surface and $\{X,Y\}$ is an orthonormal basis for $\H$ and $U\in \V$ then
\[A^{*}_{X}U=\SPE{A^{*}_{X}U}{X}X+\SPE{A^{*}_{X}U}{Y}Y=\SPE{U}{A_{X}X}X+\SPE{U}{A_{X}Y}Y.\]
From this we see that $\SPE{A^{*}_{X}U}{A^{*}_{X}U}$ does not depend on the direction of $X$. First
\begin{align*}
\SPE{A^{*}_{X}U}{A^{*}_{X}U}=&\SPE{U}{A_{X}X}^{2}+\SPE{U}{A_{X}Y}^{2}\\
=&\SPE{U}{\V(\grad\ln\lambda)}^{2}+\SPE{U}{\frac{1}{2}[X,Y]}^{2}\\
=&U(\ln\lambda)^2+\frac{1}{4}\SPE{U}{[X,Y]}^{2}.
\end{align*}
Now, since the vertical part of the Lie bracket of horizontal vector fields is a tensor, the term
$\frac{1}{4}\SPE{U}{[X,Y]}^{2}$ is in fact independent of our choice of orthonormal basis
$\{X,Y\}$ for the horizontal space. To see this, suppose $a^2+b^2=1$, then
\begin{align*}
\SPE{U}{[aX+bY,bX-aY]}^{2}=&\SPE{U}{-a^2[X,Y]+b^2[Y,X]}^{2}\\
=&(-1)^{2}\SPE{U}{[X,Y]}^{2}
\end{align*}

The proof of Theorem \ref{Jon-Curv} follows from the calculation above by polarizing twice, once in $X$
and once in $U$, but we give a direct proof below.

\begin{proof}
From Proposition \ref{Gud-curv} (iii) the curvature of a horizontally conformal
submersion with totally geodesic fibers is
\[\SPE{R(U\wedge X}{Y\wedge V}=\SPE{(\nabl{U}{A})_{X}Y}{V}+\SPE{A^{*}_{X}U}{A^{*}_{Y}V}-2 V(\ln \lambda)\SPE{A_{X}Y}{U}\]
for any $U,V\in\V$ and any $X,Y\in\H$.
Both sides of the expression are tensors, so we may extend the vectors to vector fields in any way we choose.
\begin{align*}
\SPE{R(X_{\theta}\wedge U)}{X_{\theta}\wedge V}
=&\cos^{2}(\theta)\SPE{R(X\wedge U)}{X\wedge V}+\sin^{2}(\theta)\SPE{R(Y\wedge U)}{Y\wedge V}\\
&+\cos(\theta)\sin(\theta)\left(\SPE{R(X\wedge U)}{Y\wedge V}+\SPE{R(Y\wedge U)}{X\wedge V}\right).
\end{align*}
Extend $X,Y,U,V$ to unit vector fields, then $2\SPE{\nabla_{U}X}{X}=U\SPE{X}{X}=0$ and
\begin{align*}
\SPE{R(X\wedge U)}{X\wedge V}=&\SPE{(\nabla_{U}A)_{X}X}{V}+\SPE{A^{*}_{X}U}{A^{*}_{X}V}-2V(\ln\lambda)\SPE{A_{X}X}{U}\\
=&\SPE{\nabla_{U}(A_{X}X)}{V}-\SPE{A_{\nabla_{U}X}X}{V}-\SPE{A_{X}(\nabla_{U}X)}{V}\\
&+\SPE{\SPE{A^{*}_{X}U}{X}X+\SPE{A^{*}_{X}U}{Y}Y}{\SPE{A^{*}_{X}V}{X}X+\SPE{A^{*}_{X}V}{Y}Y}\\
&-2V(\ln\lambda)\SPE{\V(\grad\ln\lambda)}{U}\\
=&\SPE{\nabla_{U}\V(\grad\ln\lambda)}{V}-\SPE{A_{\nabla_{U}X}X+A_{X}(\nabla_{U}X)}{V}\\
&+\SPE{A_{X}X}{U}\SPE{A_{X}X}{V}+\SPE{A_{X}Y}{U}\SPE{A_{X}Y}{V}-2V(\ln\lambda)U(\ln\lambda)\\
=&\SPE{\nabla_{U}\V(\grad\ln\lambda)}{V}-\SPE{\SPE{\nabla_{U}X}{X}\V(\grad \ln\lambda)}{V}\\
&+U(\ln\lambda)V(\ln\lambda)+\frac{1}{4}\SPE{[X,Y]}{U}\SPE{[X,Y]}{V}-2U(\ln\lambda)V(\ln\lambda)\\
=&\SPE{\nabla_{U}\V(\grad\ln\lambda)}{V}+U(\ln\lambda)V(\ln\lambda)\\
&+\frac{1}{4}\SPE{[X,Y]}{U}\SPE{[X,Y]}{V}-2U(\ln\lambda)V(\ln\lambda).
\end{align*}
A similar calculation shows that this equals $\SPE{R(Y\wedge U)}{Y\wedge V}$.

Now since we extended to unit vector fields $\SPE{\nabla_{U}X}{Y}=-\SPE{X}{\nabla_{U}Y}$, so
\begin{align*}
\SPE{R(X\wedge U)}{Y\wedge V}+\SPE{R(Y\wedge U)}{X\wedge V}=&
\SPE{\nabla_{U}(A_{X}Y)}{V}-\SPE{A_{\nabla_{U}X}Y}{V}-\SPE{A_{X}(\nabla_{U}Y)}{V}\\
&+\SPE{\nabla_{U}(A_{Y}X)}{V}-\SPE{A_{\nabla_{U}Y}X}{V}-\SPE{A_{Y}(\nabla_{U}X)}{V}\\
&+\SPE{A_{X}X}{U}\SPE{A_{Y}X}{V}+\SPE{A_{X}Y}{U}\SPE{A_{Y}Y}{V}\\
&+\SPE{A_{Y}Y}{U}\SPE{A_{X}Y}{V}+\SPE{A_{Y}X}{U}\SPE{A_{X}X}{V}\\
&-2V(\ln\lambda)\SPE{A_{X}Y}{U}-2V(\ln\lambda)\SPE{A_{Y}X}{U}\\
=&\SPE{\nabla_{U}(A_{X}Y)}{V}+\SPE{\nabla_{U}(A_{Y}X)}{V}\\
&-\SPE{A_{\nabla_{U}X}Y}{V}-\SPE{A_{Y}(\nabla_{U}X)}{V}\\
&-\SPE{A_{X}(\nabla_{U}Y)}{V}-\SPE{A_{\nabla_{U}Y}X}{V}\\
&+\SPE{\V(\grad\ln\lambda)}{U}\SPE{A_{Y}X+A_{X}Y}{V}\\
&+\SPE{A_{X}Y+A_{Y}X}{U}\SPE{\V(\grad\ln\lambda)}{V}\\
&-2V(\ln\lambda)\SPE{A_{X}Y+A_{Y}X}{U}\\
=&\SPE{\nabla_{U}\V([X,Y])}{V}+\SPE{\nabla_{U}\V([Y,X])}{V}\\
&-\SPE{\SPE{X}{\nabla_{U}Y}\V(\grad\ln\lambda)}{V}\\
&-\SPE{\SPE{Y}{\nabla_{U}X}\V(\grad\ln\lambda)}{V}\\
=&0.
\end{align*}
Thus, the value of $\SPE{R(X_{\theta}\wedge U)}{X_{\theta}\wedge V}$ does not depend on $\theta$.
\end{proof}

\section{Implications for Einstein manifolds}

Proposition \ref{Gud-curv} (ii) says that for a horizontally conformal submersion $\phi:(M,g)\to(N,h)$ with
totally geodesic fibers the curvature operator of $M$ satisfies
\[\SPE{R(U\wedge V)}{W\wedge X}=0\]
for all $U,V,W\in\V$ and all $X\in\H$.

Let $\{U_{k}\}$ be an orthonormal basis for $\V$ and $\{X,Y\}$ be an orthonormal basis for $\H$.
If we assume that the domain $M$ is an Einstein manifold then
\begin{align*}
0=\Ric(X,U)&=\SPE{R(X\wedge Y)}{Y\wedge U}+\sum_{k}\SPE{R(X\wedge U_{k})}{U_{k}\wedge U}\\
&=\SPE{R(X\wedge Y)}{Y\wedge U}
\end{align*}
for all $U\in\V$. This means that the curvature operator $R$ splits into invariant components
\[\wedge^{2}T_{p}M=(\wedge^{2}\V\oplus\wedge^{2}\H)\oplus W,\]
where $W$ is generated by the mixed vectors, that is,
\[R(\wedge^{2}\V\oplus\wedge^{2}\H)\subseteq\wedge^{2}\V\oplus\wedge^{2}\H\textrm{ and }R(W)\subseteq W.\]
Thus, the eigenvalues of $R$ are the union of the eigenvalues
of $R|_{\wedge^{2}\V\oplus\wedge^{2}\H}$ and $R|_{W}$.

We can define a complex structure $J$ on $W$ by $J(X\wedge U)=Y\wedge U$ and $J(Y\wedge U)=-X\wedge U$.
The curvature tensor $R|_{W}$ is, due to Theorem \ref{Jon-Curv},
represented by an Hermitian matrix $H$ with respect to this complex structure.
Let $e_{j}$ be an eigenvector to the Hermitian matrix $H$, then $e_{j}$ and $J e_{j}$ represent different real
eigenvectors for $R|_{W}$ with the same eigenvalue. Thus $R|_{W}$ and therefore $R$ has at least
$\dim(M)-2$ double eigenvalues. We get
\begin{proposition}
Let $(M,g)$ be an Einstein manifold and $(N^{2},h)$ be a Riemannian surface.
Let $R$ be the curvature operator of $(M,g)$ at $p\in M$. If there is a submersive harmonic
morphism $\phi:(M,g)\to(N^{2},h)$ with totally geodesic fibers then $R$ has at least
$\dim(M)-2$ pairs of eigenvalues.
\end{proposition}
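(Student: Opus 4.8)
The plan is to make precise the informal paragraph that precedes the statement, turning the observation about invariant subspaces and the Hermitian structure on $W$ into a clean eigenvalue count. First I would fix $p\in M$, pick an orthonormal basis $\{U_k\}$ for $\V_p$ and an orthonormal basis $\{X,Y\}$ for $\H_p$, and record the two structural facts already established: by Proposition \ref{Gud-curv} (ii) together with total geodesicity, $\SPE{R(U\wedge V)}{W\wedge X}=0$ for all $U,V,W\in\V_p$, $X\in\H_p$; and by the Einstein condition (computed via the Ricci trace as in the displayed calculation just above the statement), $\SPE{R(X\wedge Y)}{Y\wedge U}=0$ for all $U\in\V_p$. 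Hence for the orthogonal decomposition $\wedge^2 T_pM=(\wedge^2\V_p\oplus\wedge^2\H_p)\oplus W$, where $W=\H_p\wedge\V_p$ is spanned by the mixed bivectors $X\wedge U$, $Y\wedge U$, the symmetric operator $R$ maps each summand into itself. Since $R$ is self-adjoint on $\wedge^2 T_pM$, its spectrum is the multiset union of the spectra of $R|_{\wedge^2\V_p\oplus\wedge^2\H_p}$ and $R|_W$, and it suffices to produce $\dim(M)-2$ pairs inside $\operatorname{spec}(R|_W)$.

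Next I would analyze $R|_W$. Writing $m=\dim\V_p=\dim(M)-2$, the space $W$ has real dimension $2m$ with basis $\{X\wedge U_k, Y\wedge U_k\}_{k=1}^m$, and the almost complex structure $J(X\wedge U)=Y\wedge U$, $J(Y\wedge U)=-X\wedge U$ makes $W$ a complex vector space of dimension $m$ with unitary basis $\{X\wedge U_k\}$. The content of Theorem \ref{Jon-Curv} is exactly that, in this basis, the real symmetric matrix of $R|_W$ is the realification of a complex matrix $H$; concretely, the $\theta$-independence of $\SPE{R(X_\theta\wedge U)}{X_\theta\wedge V}$ forces $\SPE{R(X\wedge U)}{X\wedge V}=\SPE{R(Y\wedge U)}{Y\wedge V}$ and $\SPE{R(X\wedge U)}{Y\wedge V}+\SPE{R(Y\wedge U)}{X\wedge V}=0$ for all $U,V\in\V_p$, and together with the symmetry $\SPE{R(X\wedge U)}{Y\wedge V}=\SPE{R(Y\wedge V)}{X\wedge U}$ this is precisely the condition that $R|_W$ commutes with $J$, i.e. that $R|_W$ is complex-linear. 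A complex-linear self-adjoint operator on a Hermitian space is represented by a Hermitian matrix $H$, so I would invoke the spectral theorem for $H$: there is a unitary (complex) eigenbasis $e_1,\dots,e_m$ with real eigenvalues $\mu_1,\dots,\mu_m$.

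Finally I would translate back to real eigenvalues. Each complex eigenvector $e_j$ with $R|_W e_j=\mu_j e_j$ yields, on realification, a real $2$-dimensional $R|_W$-invariant subspace $\sp_{\rn}\{e_j, Je_j\}$ on which $R|_W$ acts as $\mu_j\cdot\mathrm{Id}$ (because $R|_W$ is real-linear and commutes with $J$, $R|_W(Je_j)=J(R|_W e_j)=\mu_j Je_j$); hence $\mu_j$ is a real eigenvalue of $R|_W$ with multiplicity at least two. Since the $e_j$ are complex-linearly independent, the $2m$ real vectors $\{e_j, Je_j\}$ are $\rn$-linearly independent and span $W$, so $\operatorname{spec}(R|_W)$ is a multiset of the form $\{\mu_1,\mu_1,\mu_2,\mu_2,\dots,\mu_m,\mu_m\}$, giving $m=\dim(M)-2$ (not necessarily distinct) pairs of equal eigenvalues. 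Union with the spectrum of $R|_{\wedge^2\V_p\oplus\wedge^2\H_p}$ then proves the claim. The only subtlety — and the step I would write out most carefully — is the bookkeeping identifying ``$R|_W$ commutes with $J$'' with ``the matrix of $R|_W$ is Hermitian in the basis $\{X\wedge U_k\}$,'' making sure the self-adjointness of $R$ on $\wedge^2$ is used to get the Hermitian (not merely complex-linear) conclusion; everything else is a direct application of the real form of the spectral theorem. Note the phrase ``at least $\dim(M)-2$ pairs of eigenvalues'' should be read as: the eigenvalues, listed with multiplicity, can be grouped into at least $\dim(M)-2$ pairs of coinciding values.
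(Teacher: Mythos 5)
Your proposal is correct and follows essentially the same route as the paper: the same splitting $\wedge^{2}T_{p}M=(\wedge^{2}\V\oplus\wedge^{2}\H)\oplus W$ obtained from Proposition 2.1 (ii) and the Einstein condition, the same complex structure $J$ on $W$, and the same use of Theorem 1.1 to see that $R|_{W}$ is Hermitian, so that each eigenvector pairs with its $J$-image to give $\dim(M)-2$ double eigenvalues. You merely spell out the commutation-with-$J$ and spectral-theorem bookkeeping that the paper leaves implicit.
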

In particular, the relationship between the determinants of $R|_{W}$ and $H$ is $\det(R|_{W})=\det(H)^{2}$.
So if $F$ is the characteristic polynomial of $H$ and $f$ the characteristic polynomial of $R_{W}$,
then $f=F^{2}$ and $F$ is a factor of $\gcd(f,f^{'})$. Thus $\gcd(f,f^{'})$ is a polynomial of degree at least
$\deg(F)=\dim(M)-2$.

\section{Examples}

We give an example of a five dimensional manifold that does not have any
conformal foliations with totally geodesic fibers, not even locally. The two homogeneous
Einstein manifolds below were found by Alekseevsky in \cite{Alek}, but we use the notation of \cite{Nikon}.

\begin{example}\label{NoTot}
Let $S$ be the five dimensional homogeneous Einstein manifold given in Theorem 1(5) of \cite{Nikon}.
This is a solvable simply connected Lie group corresponding to the Lie algebra $\s$ given by an orthonormal basis
$\{A,X_{1},X_{2},X_{3},X_{4}\}$ with Lie brackets
\begin{align*}
&[X_{1},X_{2}]=\sqrt{\frac{2}{3}} X_{3},\,[X_{1},X_{3}]=\sqrt{\frac{2}{3}} X_{4},\\
&[A,X_{j}]=\frac{j}{\sqrt{30}}X_{j},\, j=1,2,3,4.
\end{align*}
A long but straightforward calculation shows that the curvature operator is given by
\[\frac{1}{30}\left[\begin{array}{cccccccccc}
13 & -2\sqrt{5} & -4\sqrt{5} & 0 & 0 & 0 & 0 & 0 & 0 & 0\\
-2\sqrt{5} & 4 & 0 & 0 & 0 & 0 & 0 & 0 & 0 & 0\\
-4\sqrt{5} & 0 & 16 & 0 & 0 & 0 & 0 & 0 & 0 & 0\\
0 & 0 & 0 & 8 & 0 & 0 & 0 & 0 & 0 & 0\\
0 & 0 & 0 & 0 & 1 & 0 & 0 & \sqrt{5} & 0 & \sqrt{5}\\
0 & 0 & 0 & 0 & 0 & 9 & -3\sqrt{5} & 0 & -3\sqrt{5} & 0\\
0 & 0 & 0 & 0 & 0 & -3\sqrt{5} & 17 & 0 & 5 & 0\\
0 & 0 & 0 & 0 & \sqrt{5} & 0 & 0 & 1 & 0 & 5\\
0 & 0 & 0 & 0 & 0 & -3\sqrt{5} & 5 & 0 & -1 & 0\\
0 & 0 & 0 & 0 & \sqrt{5} & 0 & 0 & 5 & 0 & 7
\end{array}\right]\]
with respect to the basis
\[\{X_{1}\wedge X_{3},X_{2}\wedge A,X_{4}\wedge A,X_{2}\wedge X_{4},
X_{1}\wedge A,X_{3}\wedge A,X_{1}\wedge X_{2},X_{2}\wedge X_{3},X_{1}\wedge X_{4},X_{3}\wedge X_{4}\}.\]
Let $f$ be the characteristic polynomial, then $\gcd(f(x),f'(x))=-\frac{4}{15}+x$, which is a
polynomial of degree $1<3$, so there are no conformal foliations with totally geodesic fibers.
\end{example}

One way to produce foliations on a Lie group $G$ is to find a subalgebra $\v$
of the Lie algebra $\g$ of $G$. The Riemannian metric on $G$ is the left translation of the
scalar product on $\g$. If $\v$ corresponds to a closed subgroup $K$ we foliate by left translating
this subgroup, $\F=\{L_{g}K\}_{g\in G}$.
The foliation has totally geodesic fibers if and only if
\begin{align*}
\SPE{B_{U}V}{X}&=-\frac{1}{2}(\SPE{[X,U]}{V}+\SPE{[X,V]}{U})=0
\end{align*}
for all $U,V\in\v$ and all $X\in\h=\v^{\bot}$ and is conformal if
\begin{align*}
(\L_{V}g)(X,Y)&=-\frac{1}{2}(\SPE{[V,X]}{Y}+\SPE{[V,Y]}{X})=\nu(V)\SPE{X}{Y}
\end{align*}
for all $V\in\v$ and all $X,Y\in\h$ where $\nu$ is a linear functional on $\v$.

For the Example \ref{NoTot}.
If we define a foliation by setting $\v=\{A,X_{2},X_{4}\}$ and
$\h=\{X_{1},X_{3}\}$ in the procedure above, then we get a foliation
with totally geodesic fibers but it is not conformal.
If instead we define a foliation by setting $\v=\{X_{2},X_{3},X_{4}\}$ and
$\h=\{A,X_{1}\}$, then we get a conformal foliation but
this does not have totally geodesic fibers, in fact, not even minimal fibers.

We also give an example of a five dimensional manifold with a conformal foliation with totally geodesic fibers,
and see how the curvature operator behaves.
\begin{example}
Let $S$ be the five dimensional homogeneous Einstein manifold given in Theorem 1(4) of \cite{Nikon}.
This is a solvable simply connected Lie group corresponding to the Lie algebra $\s$ given by an orthonormal basis
$\{A,X_{1},X_{2},X_{3},X_{4}\}$ with Lie brackets
\begin{align*}
&[X_{1},X_{2}]=\sqrt{\frac{2}{3}} X_{3},\\
&[A,X_{1}]=\frac{2}{\sqrt{33}}X_{1},\,[A,X_{2}]=\frac{2}{\sqrt{33}}X_{2},\,
[A,X_{3}]=\frac{4}{\sqrt{33}}X_{3},\,
[A,X_{4}]=\frac{3}{\sqrt{33}}X_{4}.
\end{align*}
If we left translate $\v=\{A,X_{3},X_{4}\}$ and $\h=\{X_{1},X_{2}\}$ we get a conformal
foliation with totally geodesic fibers.
The curvature operator is given by
\[\frac{1}{66}\left[\begin{array}{cccccccccc}
41 & -4\sqrt{22} & 0 & 0 & 0 & 0 & 0 & 0 & 0 & 0\\
-4\sqrt{22} & 32 & 0 & 0 & 0 & 0 & 0 & 0 & 0 & 0\\
0 & 0 & 18 & 0 & 0 & 0 & 0 & 0 & 0 & 0\\
0 & 0 & 0 & 24 & 0 & 0 & 0 & 0 & 0 & 0\\
0 & 0 & 0 & 0 & 8 & 0 & 0 & 2\sqrt{22} & 0 & 0\\
0 & 0 & 0 & 0 & 0 & 8 & -2\sqrt{22} & 0 & 0 & 0\\
0 & 0 & 0 & 0 & 0 & -2\sqrt{22} & 5 & 0 & 0 & 0\\
0 & 0 & 0 & 0 & 2\sqrt{22} & 0 & 0 & 5 & 0 & 0\\
0 & 0 & 0 & 0 & 0 & 0 & 0 & 0 & 12 & 0\\
0 & 0 & 0 & 0 & 0 & 0 & 0 & 0 & 0 & 12
\end{array}\right]\]
with respect to the basis
\[\{X_{1}\wedge X_{2},X_{3}\wedge A,X_{4}\wedge A,X_{3}\wedge X_{4},X_{1}\wedge A,X_{2}\wedge A,
X_{1}\wedge X_{3},X_{2}\wedge X_{3},X_{1}\wedge X_{4},X_{2}\wedge X_{4}\}.\]
We see that the curvature operator
satisfies the conclusions of Theorem \ref{Jon-Curv}.
\end{example}

We now give some details about Example 6.1 and 6.2 of \cite{Gud-Sven-2013}.
\begin{example}\label{GudSvenEx1}
For Example 6.1 we let $\g_{1}$ be the Lie algebra generated by the orthonormal vectors
$\{W,X_{1},\ldots,X_{n+1}\}$ with Lie brackets
\[[W,X_{k}]=X_{k+1}\textrm{ for }k=1,\ldots,n.\]
Let $\v=\sp\{X_{2},\ldots,X_{n+1}\}$, this is a subalgebra of codimension $2$. The Ricci curvature of $G_{1}$ the
simply connected Lie group related to $\g_{1}$
satisfies $\Ric(W,W)-\Ric(X_{1},X_{1})=\frac{1-n}{2}$.
In this case
\[\SPE{B_{X_{2}}X_{3}}{W}=\frac{1}{2},\]
and thus the foliation defined by $\v$ it is not totally geodesic.
\end{example}
\begin{example}\label{GudSvenEx2}
For Example 6.2, let $\g_{2}$ be generated by the orthonormal vectors
$\{W,X_{1},\ldots,X_{n}\}$ with Lie brackets
\[[W,X_{k}]=\alpha_{k}X_{k}\textrm{ where }\alpha_{k}\in\rn\textrm{ for }k=1,\ldots,n.\]
Let $\v=\sp\{X_{2},\ldots,X_{n}\}$, this is a subalgebra of codimension $2$. The Ricci curvature of $G_{2}$ the
simply connected Lie group related to $\g_{2}$ satisfies
\[\Ric(X_{1})=-\alpha_{1}(\alpha_{1}+\ldots+\alpha_{n})X_{1}\textrm{ and }
\Ric(W)=-(\alpha_{1}^{2}+\ldots+\alpha_{n}^{2})W.\]
The fibers of the foliation given by $\v$ are totally geodesic if and only if
$\alpha_{2}=\ldots=\alpha_{n}=0$, in which case the Ricci curvature condition is satisfied.
\end{example}

\section{Holomorphic harmonic morphisms}

In this section we will show that the Ricci curvature condition still holds under weaker conditions than
totally geodesic fibers.

\begin{definition}
Let $\phi:(M^{2m},g,J)\to (N^{2},h,J^{N})$ be a horizontally conformal map between almost
Hermitian manifolds. We say that $J$ is \textbf{adapted} to $\phi$ if $\phi$ is holomorphic with respect to $J$.
\end{definition}

If $M$ is $4$-dimensional then locally there exist exactly two adapted almost complex structures
(up to sign), in higher dimensions there are several such structures.
If $J$ is adapted then $J\V\subseteq\V$, $J\H\subseteq\H$ and $J$ commutes with the
orthogonal projections of $TM$ onto $\V$ and $\H$.

An almost complex structure $J$ is integrable if and only if the \textbf{Nijenhuis tensor},
\[N_{J}(Z,W)=[Z,W]+J[JZ,W]+J[Z,JW]-[JZ,JW],\]
is zero, in which case we say that $J$ is a complex structure.

\begin{definition}
Let $\F$ be a foliation on an almost Hermitian manifold $(M^{2m},g,J)$ with vertical distribution $\V$.
We say that the almost complex structure is compatible with the second fundamental form
$B$ of $\F$ if $J B_{U}V=B_{JU}V=B_{U}JV$ for all $U,V\in\V$.
\end{definition}

\begin{definition}
Let $\F$ be a foliation on an almost Hermitian manifold $(M,g,J)$ with vertical distribution $\V$.
$\F$ is said to have \textbf{superminimal} fibers if $\nabla_{U}J=0$ for all $U\in\V$.
\end{definition}

It is known that if a conformal foliation on an almost Hermitian manifold has superminimal fibers then
the almost complex structure is compatible with the second fundamental form, Section 7.8 in \cite{BW-book}
and is integrable, Proposition 7.9.1 of \cite{BW-book}.

\begin{lemma}
Let $(M,g,J)$ be an almost Hermitian manifold. If $J$ is compatible with the second fundamental
form $B$, then
\[B^{*}_{U}JX=-B^{*}_{JU}X=JB^{*}_{U}X,\]
for all $U\in\V$ and $X\in\H$.
\end{lemma}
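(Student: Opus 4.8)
The statement relates the adjoint $B^{*}_U$ of the second fundamental form to the one of $B_{JU}$ and to composition with $J$. The natural strategy is to translate everything back to the definition of $B^{*}$ in terms of $B$ via the inner product, use the hypothesis $JB_UV = B_{JU}V = B_UJV$, and exploit that $J$ is an isometry (so $\langle JZ,JW\rangle = \langle Z,W\rangle$) together with $J^2=-\mathrm{id}$. Recall from Section 2 that $B^{*}_U F = -\V(\nabla_U\H F)$, equivalently $\langle B^{*}_U X, V\rangle = -\langle X, B_U V\rangle$ for $X\in\H$ and $V\in\V$ (this is the adjoint relation, since $\langle B_U V, X\rangle = \langle \H(\nabla_U\V V),X\rangle = \langle \nabla_U V, X\rangle = -\langle V,\nabla_U X\rangle$).

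First I would establish $B^{*}_U JX = JB^{*}_U X$. Since $J$ preserves $\V$ and $\H$, both sides lie in $\V$, so it suffices to pair against an arbitrary $V\in\V$. On the one hand $\langle B^{*}_U JX, V\rangle = -\langle JX, B_U V\rangle = -\langle X, J^{-1}B_U V\rangle = \langle X, JB_U V\rangle$, using that $J$ is skew-adjoint ($\langle JZ,W\rangle = -\langle Z,JW\rangle$). On the other hand $\langle JB^{*}_U X, V\rangle = -\langle B^{*}_U X, JV\rangle = \langle X, B_U JV\rangle$. By compatibility $JB_UV = B_UJV$, so the two expressions agree, giving $B^{*}_U JX = JB^{*}_U X$.

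Next I would establish $B^{*}_{JU}X = -JB^{*}_U X$, again by pairing with $V\in\V$: $\langle B^{*}_{JU}X, V\rangle = -\langle X, B_{JU}V\rangle = -\langle X, JB_U V\rangle$, using the other half of the compatibility hypothesis $B_{JU}V = JB_U V$. Then $-\langle X, JB_UV\rangle = \langle JX, B_U V\rangle = -\langle B^{*}_U JX, V\rangle = -\langle JB^{*}_U X, V\rangle$ by the identity just proved. Hence $B^{*}_{JU}X = -JB^{*}_U X = -B^{*}_U JX$, which combined with the first identity gives the full chain $B^{*}_U JX = -B^{*}_{JU}X = JB^{*}_U X$.

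There is no serious obstacle here: the computation is entirely formal once one writes $B^{*}$ through its adjoint characterization and uses that $J$ is an orthogonal, skew-symmetric, squares-to-$(-\mathrm{id})$ operator preserving the splitting. The only point requiring mild care is bookkeeping of signs when moving $J$ across the inner product (skew-adjointness) versus using $J^{-1}=-J$; I would be explicit about which is used at each step. One could alternatively prove it purely from the covariant-derivative definition $B^{*}_U X = -\V(\nabla_U X)$ by differentiating $JX$ and using that $\nabla J$ interacts with the projections appropriately, but the adjoint route is cleaner and avoids needing any statement about $\nabla J$, which is not assumed in the hypotheses of this lemma.
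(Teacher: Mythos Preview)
Your approach is the same as the paper's: test against an arbitrary $V\in\V$, use the adjoint relation between $B$ and $B^{*}$, the compatibility hypothesis, and the skew-adjointness of $J$. One cosmetic slip: since $B^{*}_{U}$ is the genuine adjoint of $B_{U}$, the relation is $\langle B^{*}_{U}X,V\rangle = +\langle X,B_{U}V\rangle$ (your own parenthetical derivation actually yields this sign, and the paper uses it); you state it with an extra minus, but because you invoke it exactly once on each side of every identity the sign error cancels and your conclusions are unaffected.
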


\begin{proof}
The proof is a simple calculation. Let $V\in\V$, then
\begin{align*}
\SPE{B^{*}_{U}JX}{V}=&\SPE{JX}{B_{U}V}\\
=&-\SPE{X}{J B_{U}V}\\
=&-\SPE{X}{B_{JU}V}=-\SPE{B^{*}_{JU}X}{V}\\
=&-\SPE{X}{B_{U}JV}=-\SPE{B^{*}_{U}X}{JV}=\SPE{JB^{*}_{U}X}{V},
\end{align*}
since $V$ is arbitrary the lemma follows.
\end{proof}

We will show that the Ricci curvature condition holds in any even dimension if one of the adapted almost
complex structures is integrable and compatible with the second fundamental form.
The result is similar to Proposition 6.3 in \cite{LouPan} that deals with the $4$-dimensional case.

Wood showed, see Proposition 3.9 of \cite{Wood92}, 
that in four dimensions the adapted almost complex structure is integrable if and only if
the fibers of the foliation are superminimal. Thus in four dimensions we only have to assume
that the adapted almost complex structure is integrable.

\begin{theorem}
Let $\phi:M^{2m}\to N^{2}$ be a harmonic morphism between Hermitian manifolds $(M^{2m},g,J)$ and
$(N^{2},h,J^{N})$. Suppose that $J$ is adapted to $\phi$ and compatible with the second
fundamental form $B$. Then
\[\Ric(X,X)=\Ric(Y,Y)\textrm{ and }\Ric(X,Y)=0\]
for $X,Y\in\H$ orthonormal.
\end{theorem}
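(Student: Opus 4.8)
The plan is to compute $\Ric(X,Y)$ and $\Ric(X,X)-\Ric(Y,Y)$ by summing the appropriate components of the curvature tensor over an orthonormal frame adapted to the splitting $T_pM=\V_p\oplus\H_p$, and then to exploit the compatibility of $J$ with $B$ together with the adaptedness of $J$ to kill the unwanted terms. Concretely, fix an orthonormal basis $\{X,Y\}$ of $\H_p$ with $Y=JX$ (possible since $J$ is adapted, so $J\H\subseteq\H$ and $J$ is a complex structure on the $2$-dimensional space $\H_p$), and an orthonormal basis $\{U_k\}$ of $\V_p$. Then
\[
\Ric(X,Y)=\SPE{R(X\wedge Y)}{Y\wedge Y}+\sum_k\SPE{R(X\wedge U_k)}{U_k\wedge Y}=\sum_k\SPE{R(X\wedge U_k)}{U_k\wedge Y},
\]
and similarly $\Ric(X,X)-\Ric(Y,Y)=\sum_k\big(\SPE{R(X\wedge U_k)}{U_k\wedge X}-\SPE{R(Y\wedge U_k)}{U_k\wedge Y}\big)$, the purely horizontal sectional terms cancelling by the antisymmetry of $R$ in its last two slots. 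So everything reduces to the mixed curvature terms $\SPE{R(U\wedge X)}{Y\wedge V}$, for which Proposition~\ref{Gud-curv}(iii) gives an explicit formula.

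Next I would substitute the formula from Proposition~\ref{Gud-curv}(iii) into both sums. The $A$-related terms, namely $\SPE{(\nabl{U}{A})_XY}{V}$, $\SPE{A^*_XU}{A^*_YV}$ and the dilation term $-2V(\ln\lambda)\SPE{A_XY}{U}$, are exactly the terms that appear in the totally-geodesic case, so by the argument already carried out in the proof of Theorem~\ref{Jon-Curv} (applied here with the roles of $X$ and $Y=JX$, and polarised in $U$), their contribution to $\sum_k\SPE{R(X\wedge U_k)}{U_k\wedge Y}$ vanishes and their contribution to the $X,X$ versus $Y,Y$ difference also vanishes — this is essentially the statement that the ``$A$-part'' of the curvature already satisfies the Ricci curvature condition, which is the content of the $B=0$ theorem. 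What remains are the two genuinely new terms coming from the second fundamental form, $\SPE{(\nabla_XB^*)_UY}{V}$ and $-\SPE{B^*_VY}{B^*_UX}$. These I would handle using the Lemma just proved: since $J$ is compatible with $B$, we have $B^*_UJX=-B^*_{JU}X=JB^*_UX$, and I would use this, together with $\nabla J=?$ — here I need to be careful, but since $J$ is integrable and the setup is Hermitian the relevant derivatives of $J$ in vertical directions, or a suitable Weitzenböck/parallel argument, should let me relate $(\nabla_XB^*)_{U}(JX)$ to $J(\nabla_XB^*)_UX$ up to terms that sum to zero.

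For the $-\SPE{B^*_VY}{B^*_UX}$ term: in $\sum_k\SPE{R(X\wedge U_k)}{U_k\wedge Y}$ this contributes $-\sum_k\SPE{B^*_{U_k}Y}{B^*_{U_k}X}=-\sum_k\SPE{B^*_{U_k}(JX)}{B^*_{U_k}X}=-\sum_k\SPE{JB^*_{U_k}X}{B^*_{U_k}X}=0$ since $J$ is skew-adjoint. For the difference $\Ric(X,X)-\Ric(Y,Y)$ it contributes $-\sum_k\big(\SPE{B^*_{U_k}X}{B^*_{U_k}X}-\SPE{B^*_{U_k}Y}{B^*_{U_k}Y}\big)$, and using $B^*_{U_k}Y=B^*_{U_k}(JX)=JB^*_{U_k}X$ together with $|J\,\cdot\,|=|\cdot|$ this is again $0$. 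The analogous cancellations for the $(\nabla_XB^*)_UY$ terms are the crux: I expect to rewrite $\sum_k\SPE{(\nabla_XB^*)_{U_k}Y}{U_k}$ and the corresponding difference, move the covariant derivative through $J$ using $J\H\subseteq\H$, $J\V\subseteq\V$ and integrability (so that $\nabla J$ has no bad components in the directions that matter), and see the terms pair up and cancel exactly as in the $B=0$ proof.

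The main obstacle I anticipate is the $\nabla_X B^*$ term: unlike the algebraic term $B^*_VB^*_U$, it involves a horizontal covariant derivative and hence potentially terms in $\nabla_X J$, which need not vanish just because $J$ is integrable and compatible with $B$ (those hypotheses are vertical in nature). I would try to circumvent this either by symmetrising over the frame and using the harmonicity (minimality of fibers, i.e. $\sum_k B_{U_k}U_k=0$, plus horizontal conformality) to show the offending trace vanishes, or by invoking the second Bianchi identity to trade $\nabla_X B^*$ against vertical derivatives where the compatibility hypothesis bites directly — mirroring the strategy in Proposition~6.3 of \cite{LouPan}. If neither clean route works, the fallback is the brute-force polarisation of the computation in the proof of Theorem~\ref{Jon-Curv}, carried out with $B\neq0$ and then simplified using the Lemma at the very end.
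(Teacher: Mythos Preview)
Your setup and the handling of the $A$-part and the algebraic $B^*$-term ($F_4$ in the paper's notation) are correct and match the paper. But the $(\nabla_X B^*)_U Y$ term, which you yourself flag as the crux, is left unresolved, and none of your three suggested routes is the one that actually works here.

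The paper's key device---which you are missing---has three ingredients. First, one pairs each $U_i$ with $V_i=JU_i$ and proves the identity \emph{pair by pair}, not just after summing; this is what makes the $J$-compatibility of $B$ bite (for instance $B_{U}U+B_{JU}JU=B_UU+J^2B_UU=0$ without any appeal to minimality). Second, to compare $\SPE{\nabla_{JX}(B^*_U JX)}{U}$ with $\SPE{\nabla_X(B^*_UX)}{U}$ one does \emph{not} try to commute $\nabla_X$ past $J$; instead one uses the torsion-free identity $\nabla_{JX}W=\nabla_W(JX)+[JX,W]$ to swap the order of differentiation, applies the Lemma to turn $B^*_U(JX)$ into $-B^*_{JU}X=JB^*_UX$, and then swaps back. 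Third---and this is where integrability enters---the Lie-bracket debris left over from the two swaps assembles exactly into $\SPE{N_J(X,B^*_UX)}{U}$ (for the first piece) and $\SPE{X}{B_U(N_J(X,U))}$ (for the second), both of which vanish because $J$ is a complex structure. Your remark that ``integrability means $\nabla J$ has no bad components'' is not the right formulation: integrability gives $N_J=0$, and the whole computation is engineered so that $N_J$ is what appears.

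Your proposed alternatives are unlikely to succeed cleanly: the minimality trace $\sum_k B_{U_k}U_k=0$ is too weak (the paper needs the pairwise vanishing $B_UU+B_{JU}JU=0$, which uses $J$-compatibility, not minimality); the second Bianchi identity would introduce curvature terms you then have to control; and ``brute-force polarisation'' of the $B=0$ proof just reproduces the $B^*$-terms you still need to handle. Finally, note that the paper proves $\Ric(X,X)=\Ric(Y,Y)$ first and gets $\Ric(X,Y)=0$ for free by polarising with the rotated basis $\{(X\pm Y)/\sqrt{2}\}$, which halves the work compared to your plan.
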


\begin{proof}
Let $\{X,Y\}$ be an orthonormal basis for $\H$ and $\{U_{i},V_{i}\}_{i=1}^{m}$ be an orthonormal basis for $\V$
chosen in such a way that $JX=Y$ and $JU_{i}=V_{i}$. We have
\begin{align*}
\Ric(X,X)&=\sum_{i}(R(X,U_{i},U_{i},X)+R(X,V_{i},V_{i},X))+R(X,Y,Y,X)\\
\Ric(Y,Y)&=\sum_{i}(R(Y,U_{i},U_{i},Y)+R(Y,V_{i},V_{i},Y))+R(Y,X,X,Y)\\
\end{align*}
From the symmetries of the curvature operator $R(X,Y,Y,X)=R(Y,X,X,Y)$.
The curvature is given by Proposition \ref{Gud-curv} (iii). That the terms not including $B^{*}$, that is,
\[\SPE{(\nabl{U}{A})_{X}Y}{V}+\SPE{A^{*}_{X}U}{A^{*}_{Y}V}-2 V(\ln \lambda)\SPE{A_{X}Y}{U},\]
satisfy the Ricci curvature condition is clear from Theorem \ref{Jon-Curv}. We denote the terms that contain
$B^{*}$ by $\tilde{R}$,
\[\SPE{\tilde{R}(U\wedge X}{Y\wedge V}=\SPE{(\nabla_{X}B^{*})_{U}Y}{V}-\SPE{B^{*}_{V}Y}{B^{*}_{U}X},\]
where by definition
\[\SPE{(\nabla_{X}B^{*})_{U}X}{U}=\SPE{\nabla_{X}(B^{*}_{U}X)}{U}-\SPE{B^{*}_{\nabla_{X}U}X}{U}-\SPE{B^{*}_{U}\nabla_{X}X}{U}.\]
Thus to prove $\Ric(X,X)=\Ric(Y,Y)$, we want to show that
\begin{align*}
\tilde{R}(X,U_{i},U_{i},X)+\tilde{R}(X,V_{i},V_{i},X)=\tilde{R}(Y,U_{i},U_{i},Y)+\tilde{R}(Y,V_{i},V_{i},Y)
\end{align*}
for each $i$. Since we prove it for each $i$ we will suppress the index and assume $JU=V$.

Now define $F_{1},F_{2},F_{3}$ and $F_{4}$ by
\begin{align*}
F_{1}(X)&=\SPE{\nabla_{X}(B^{*}_{U}X)}{U}+\SPE{\nabla_{X}(B^{*}_{V}X)}{V}\\
F_{2}(X)&=\SPE{B^{*}_{\nabla_{X}U}X}{U}+\SPE{B^{*}_{\nabla_{X}V}X}{V}\\
F_{3}(X)&=\SPE{B^{*}_{U}\nabla_{X}X}{U}+\SPE{B^{*}_{V}\nabla_{X}X}{V}\\
F_{4}(X)&=|B^{*}_{U}X|^{2}+|B^{*}_{V}X|^{2}.
\end{align*}
Then
\[\tilde{R}(X,U,U,X)+\tilde{R}(X,V,V,X)=F_{1}(X)-F_{2}(X)-F_{3}(X)-F_{4}(X).\]
We will show that $F_{j}(X)=F_{j}(Y)$ for $j=1,\ldots,4$, which implies $\Ric(X,X)=\Ric(Y,Y)$.

We start with $F_{1}$
\begin{align*}
F_{1}(Y)=&\SPE{\nabla_{JX}(B^{*}_{U}JX)}{U}+\SPE{\nabla_{JX}(B^{*}_{V}JX)}{V}\\
=&-\SPE{\nabla_{JX}(B^{*}_{JU}X)}{U}-\SPE{\nabla_{JX}(B^{*}_{JV}X)}{V}\\
=&-\SPE{\nabla_{B^{*}_{JU}X}JX+[JX,B^{*}_{JU}X]}{U}-\SPE{\nabla_{B^{*}_{JV}X}JX+[JX,B^{*}_{JV}X]}{V}\\
=&-\SPE{J\nabla_{B^{*}_{JU}X}X-[JX,JB^{*}_{U}X]}{U}-\SPE{J\nabla_{B^{*}_{JV}X}X-[JX,JB^{*}_{V}X]}{V}\\
=&\SPE{\nabla_{B^{*}_{JU}X}X}{JU}+\SPE{[JX,JB^{*}_{U}X]}{U}+\SPE{\nabla_{B^{*}_{JV}X}X}{JV}+\SPE{[JX,JB^{*}_{JU}X]}{V}\\
=&\SPE{\nabla_{B^{*}_{V}X}X}{V}+\SPE{\nabla_{B^{*}_{U}X}X}{U}+\SPE{[JX,JB^{*}_{U}X]-J[JX,B^{*}_{U}X]}{U}\\
=&\SPE{\nabla_{X}B^{*}_{V}X-[X,B^{*}_{V}X]}{V}+\SPE{\nabla_{X}B^{*}_{U}X-[X,B^{*}_{U}X]}{U}\\
&+\SPE{[JX,JB^{*}_{U}X]-J[JX,B^{*}_{U}X]}{U}\\
=&\SPE{\nabla_{X}(B^{*}_{U}X)}{U}+\SPE{\nabla_{X}(B^{*}_{V}X)}{V}\\
&+\SPE{[JX,JB^{*}_{U}X]-J[JX,B^{*}_{U}X]-[X,B^{*}_{U}X]-J[X,JB^{*}_{U}X]}{U}\\
=&\SPE{\nabla_{X}(B^{*}_{U}X)}{U}+\SPE{\nabla_{X}(B^{*}_{V}X)}{V}-\SPE{N_{J}(X,B^{*}_{U}X)}{U}\\
=&\SPE{\nabla_{X}(B^{*}_{U}X)}{U}+\SPE{\nabla_{X}(B^{*}_{V}X)}{V}\\
=&F_{1}(X).
\end{align*}

Next is $F_{2}$
\begin{align*}
F_{2}(Y)=&\SPE{B^{*}_{\nabla_{JX}U}JX}{U}+\SPE{B^{*}_{\nabla_{JX}V}JX}{V}\\
=&\SPE{JX}{B_{\nabla_{JX}U}U}+\SPE{JX}{B_{\nabla_{JX}V}V}\\
=&\SPE{JX}{B_{U}(\nabla_{JX}U)}+\SPE{JX}{B_{V}(\nabla_{JX}V)}\\
=&\SPE{JX}{B_{U}(\nabla_{U}JX+[JX,U])}+\SPE{JX}{B_{V}(\nabla_{V}JX+[JX,V])}\\
=&\SPE{JX}{B_{U}(\nabla_{U}JX)}+\SPE{JX}{B_{V}(\nabla_{V}JX)}\\
&+\SPE{JX}{B_{U}([JX,U])}+\SPE{JX}{B_{JU}([JX,JU])}\\
=&\SPE{X}{B_{U}(\nabla_{U}X)}+\SPE{X}{B_{V}(\nabla_{V}X)}+\SPE{X}{B_{U}([JX,JU]-J[JX,U])}\\
=&\SPE{X}{B_{U}(\nabla_{X}U-[X,U])}+\SPE{X}{B_{V}(\nabla_{X}V-[X,V])}\\
&+\SPE{X}{B_{U}([JX,JU]-J[JX,U])}\\
=&\SPE{B^{*}_{\nabla_{X}U}X}{U}+\SPE{B^{*}_{\nabla_{X}V}X}{V}\\
&+\SPE{X}{B_{U}(-[X,U]-J[X,JU]+[JX,JU]-J[JX,U])}\\
=&\SPE{B^{*}_{\nabla_{X}U}X}{U}+\SPE{B^{*}_{\nabla_{X}V}X}{V}-\SPE{X}{B_{U}(N_{J}(X,U))}\\
=&F_{2}(X).
\end{align*}

Now we show that $F_{3}(X)=0$, the same is true for $F_{3}(Y)$,
\begin{align*}
F_{3}(X)=&\SPE{B^{*}_{U}\nabla_{X}X}{U}+\SPE{B^{*}_{V}\nabla_{X}X}{V}\\
=&\SPE{\nabla_{X}X}{B_{U}U+B_{V}V}\\
=&0.
\end{align*}

The last one $F_{4}$ follows from
\begin{align*}
F_{4}(X)=&|B^{*}_{U}X|^{2}+|B^{*}_{V}X|^{2}\\
=&|B^{*}_{V}JX|^{2}+|B^{*}_{U}JX|^{2}\\
=&|B^{*}_{U}Y|^{2}+|B^{*}_{V}Y|^{2}\\
=&F_{4}(Y)
\end{align*}

We have shown that $\Ric(X,X)=\Ric(Y,Y)$ for any orthonormal basis, since
$\{\frac{1}{\sqrt{2}}(X+Y),\frac{1}{\sqrt{2}}(X-Y)\}$ also is an orthonormal basis we have
\[\Ric(X,Y)=\frac{1}{2}(\Ric(\frac{X+Y}{\sqrt{2}},\frac{X+Y}{\sqrt{2}})
-\Ric(\frac{X-Y}{\sqrt{2}},\frac{X-Y}{\sqrt{2}})=0.\]
\end{proof}

We take another look at Example \ref{GudSvenEx1}. Any adapted
almost complex structure $J$ must satisfy $JW= X_{1}$ and $J(\v)\subseteq\v$. Thus
\begin{align*}
N_{J}(W,X_{n+1})=&[W,X_{n+1}]+J[W,J X_{n+1}]+J[JW,X_{n+1}]+[J W,J X_{n+1}]\\
=&J[W,J X_{n+1}]\neq 0,
\end{align*}
and non of the adapted almost complex structures are integrable.


\end{document}